
\documentclass[11pt]{amsart}
\usepackage{amsmath,amssymb,amscd}
\usepackage{breqn}

\setlength{\hoffset}{-1in} \setlength{\voffset}{-1in}
\setlength{\oddsidemargin}{1in} \setlength{\evensidemargin}{1in}
\setlength{\textwidth}{6.5in} \setlength{\textheight}{8in}
\setlength{\topmargin}{1in} \setlength{\baselineskip}{14pt}

\usepackage{arydshln}

\usepackage{amsmath,amssymb}

 \usepackage{color}
\usepackage{amsxtra, amsmath}
\usepackage{amssymb, amscd}
\usepackage{graphicx}

\newcommand\NN{\mathbb N}

\theoremstyle{plain}
\newtheorem{thm}{Theorem}[section]
\newtheorem{lem}[thm]{Lemma}

\theoremstyle{definition}

\theoremstyle{remark}

\title[\sc Time and band limiting for exceptional polynomials]{Time and band limiting for exceptional polynomials}

\author{Castro M. M., Gr\"unbaum F. A. and  Zurri\'an I.}
\address{Department of Mathematics, University of California, Berkeley
CA 94705}
\email{grunbaum@math.berkeley.edu}
\address{Departamento de Matemática Aplicada II, Universidad de Sevilla, Seville 41011}
\email{mirta@us.es}
\address{Departamento de Matemática Aplicada II, Universidad de Sevilla, Seville 41011}
\email{ignacio.zurrian@fulbrightmail.org}

\date{\today}

\thanks{}

\subjclass[2010]{33C45, 22E45, 33C47}

\keywords{Time-band limiting, exceptional polynomials}

\begin{document}

\begin{abstract} 
	 The ``time-and-band limiting" commutative property was found and exploited	by  D. Slepian, H. Landau and H. Pollak at Bell Labs in the 1960's,  and independently by M. Mehta and later by C. Tracy and H. Widom in Random matrix theory. The property in question is the existence of local operators with simple spectrum that commute with naturally appearing global ones.

	  Here we give a general result that insures the existence of a commuting differential operator for a given family of exceptional orthogonal polynomials satisfying the ``bispectral property". As a main tool  we go beyond bispectrality and make use of the notion of  Fourier Algebras associated to the given sequence of exceptional polynomials.
	We illustrate this result with two examples, of Hermite and Laguerre type, exhibiting also a nice Perline's form for the commuting differential operator.
\end{abstract}
\maketitle

\section{Introduction}

The time-and-band limiting problem has its origins in signal processing, and was motivated by a question posed by Claude Shannon in  \cite{Sh}: what is the best use one can make of the values of the Fourier transform $\mathcal{F}f(k)$, for values of $k$ in the band 
$\left[- \mathcal W,\mathcal W\right]$, to recover   the time-limited signal $f(x)$ with finite support $\left[-\mathcal T,\mathcal T\right]$.
 The solution was given by three workers at Bell Labs during the 1960s: David Slepian, Henry Landau, and Henry Pollak. See \cite{SLP1,SLP2,SLP3,SLP4,SLP5,S1,S2}.

These papers show the central role of an integral operator $S$- with a kernel built from the exponentials that feature in the Fourier transform - which exhibits a ``spectral gap". Only about $2\mathcal W\times2\mathcal T$ of its eigenvalues are truly nonzero and one should find the projection of $f(x)$ on the span of the corresponding eigenfunctions. Any effort to compute the components of $f(x)$ along the remaining eigenfunctions will result in numerical instability. In practice the eigenfunctions of the integral operator in question can only be computed because they happen to be the eigenfunctions of a commuting differential operator $T$. This unexpected algebraic accident has played a role in other areas, even as far afield as the study of the Riemann zeta function, see \cite{CM}. For a further discussion of this and other historical points the reader can consult \cite{CG23}.

\bigskip
 
 The ``bispectral problem" introduced in \cite{DG} is an effort to understand the reason for the existence of this commuting operator $T$ and to extend this commutativity miracle.
The are numerous papers linking bispectrality with the building of a commuting operator, the most recent ones may be \cite{CGYZ1,CGYZ2,CGYZ3,CGYZ4,CG23}.

One has a ``bispectral situation" when a function $f(x,n)=q_n(x)$ is a simultaneous eigenfunction of an operator $D$, acting in the variable $x$, and an operator $L$, acting in the variable $n$. Namely,
\begin{equation*}
Dq_n(x) = \lambda_nq_n(x),\qquad
Lq_n(x)=\mu_xq_n(x),
\end{equation*}
where the eigenvalue $\lambda_n$ (respectively $\mu_x$) depends only on $n$ (respectively $x$). Both variables may be discrete or continuous. In this work we shall consider a continuous-discrete setup, given by a family of orthogonal polynomials that are eigenfunctions of a common differential operator. Previous works in the same direction include \cite{G3}, where the situation of the classical orthogonal polynomials is considered, and for matrix valued orthogonal polynomials we point out \cite{GPZ3,CG5, CG6, CGPZ, GPZ1,GPZ2}.
As a continuation of the work started in \cite{CG23}, in this paper we deal with sequences of exceptional orthogonal polynomials in connection with time-and-band limiting. We manage to get results that were not reached in \cite{CG23}.

\bigskip

 The relationship between exceptional polynomials (to be defined in Section \ref{Preliminares}), bispectrality and the use of the Darboux process has been considered by different authors, \cite{
GU2,
Qu,
STZ,
GUHerm,
GUKKM,
GGU2,
KM,
CGYZ4,
CG23}.  
The study of exceptional orthogonal polynomials has grown into a very active area in recent years. The literature is large and we just mention a few papers, \cite{Durec,Du,GGU2,GUHerm,GU2,GUKKM,KM,Qu,STZ}. A more detailed account  is given in \cite{CG23}.

 \section{The contents of the paper}\label{structure}

The case of exceptional Jacobi Polynomials was already covered in \cite[Section 5]{CG23}, where an effort is made to write down the commuting operator in ``Perline form" as explained now.

Quite a while back the case of polynomials defined on finite sets was considered in \cite{P1,P2,Per}. In \cite{Per} R. Perline obtained a simple form for the commuting local operator. 
 It was subsequently seen, see \cite{GPZ1, GVZ} that this simple form of Perline applies unchanged to
 other situations. 
In \cite{GVZ} one shows that a more complicated form
 of the commuting operator, still built with an appropriate extension of  Perline's construction, yields one with simple spectrum.
 
We will see in this paper that to obtain a Perline form of the commuting operator one has to go beyond bispectrality and make use of the more general  notion of Fourier Algebras (see Section \ref{FA} below), first considered  in \cite{CY1} in connection with bispectral meromorphic functions and the commutativity property and in \cite{CY2} in the framework of  matrix-valued orthogonal polynomials. This concept was also exploited in \cite{CGYZ1,CGYZ2,CGYZ3,CGYZ4}.
 
Specifically, in Theorem \ref{Thm} we give a general result that guarantees the existence of a commuting differential operator $T$ for an integral operator $S$ built from a family of exceptional orthogonal polynomials satisfying the bispectral conditions (\ref{bis}) below. The main tool turns out to be the computation of the dimensions of certain subspaces of the Fourier Algebras associated with the given sequence of exceptional polynomials (see Section \ref{FA}). The method of considering the Fourier algebras to find a commuting operator is not new and was used already in \cite{CGYZ4}. Here, we dive into a more detailed study and we proceed with the considerations needed in the particular case given by exceptional orthogonal polynomials, which do not fit neatly with the general results.

\section{Preliminaries}\label{Preliminares}

In this paper we deal with sequences of real valued polynomials \{$q_n(x)\}_{n\in\NN_0}$ for which there exist a nontrivial differential operator $D$ and a nontrivial difference operator $L$ such that:
\begin{equation}\label{bis}
Dq_n(x) = \lambda_nq_n(x),\qquad
Lq_n(x)=\mu_xq_n(x).
\end{equation}
We allow $q_n\equiv0$ only for $n$ in a finite set $X$, these are the so called {\it exceptional degrees}. On the other hand, we require $\deg({q_n})=n$ for $n\in Z:=\mathbb N_0\setminus X$ and \{$q_n(x)\}_{n\in Z}$ to be a complete orthonormal system with respect to a weight $w(x)$ defined on a (possibly infinite) open interval $(a,b)$. 
In the particular situation when $\operatorname{ord}(D)=2$, we are dealing with what are nowadays called ``exceptional orthogonal polynomials".
 
\bigskip

We consider the Hilbert spaces $\ell^2(Z)$ and  $L^2(w)=L^2((a,b), w(x)dx)$ given by the real valued sequences
$\{\gamma_n\}_{n\in Z}$ such that $\sum_{n\in Z} \gamma_n^2 < \infty$  and all measurable functions $f(x)$, $x\in (a,b)$, satisfying $\int_a^b |f(x)|^2w(x)\,dx < \infty $, respectively.
A natural   analog of the Fourier transform is  $F:\ell^2(Z) \longrightarrow L^2(w)$ given by
\begin{equation}\label{F}
\{\gamma_n\}_{n\in Z} \overset{F}{\longrightarrow} \sum_{n\in Z} \gamma_n q_n(x).\end{equation}
Its adjoint $F^*: L^2(w)\longrightarrow \ell^2(Z) $ is given by
$$ f \overset{F^*}{\longrightarrow} \gamma_n=\int_a^b f(x)\,w(x)\, q_n(x) dx.$$

In the spirit of Shannon we consider the problem of determining a function \( f \) from the following data: \( f \) is supported on the compact set \(\{n\in Z:\, n\leq N\}\), for a value $N\in Z$, and its transform \( Ff \) is known on a compact set \([a,\Omega]\), for $\Omega\in (a,b)$. This leads us to compute the singular vectors (and values) of the operator \( E: \ell^2(Z) \longrightarrow L^2(w) \) defined by
$$E f= \chi_\Omega F \chi_N f,$$
where \( \chi_N \) represents the time-limiting operator on \( \ell^2(Z) \) and \( \chi_\Omega \) represents the band-limiting operator on \( L^2(w) \).
Namely, \( \chi_N \) acts on \( \ell^2(Z) \) by zeroing out all components with indices greater than \( N \). On the other hand,  \( \chi_\Omega \) acts on \( L^2(w) \) by multiplying by the characteristic function of the interval \((a, \Omega)\).

We are thus required to study  the eigenvectors   of the operators
$$E^*E= \chi_N F^{*} \chi_\Omega F   \chi_N\qquad \text{ and } \qquad E E^*= \chi_\Omega F \chi_N F^{*} \chi_\Omega.$$

The operator $E^*E$, acting on $\ell^2(Z)$, is just a finite dimensional matrix with each  entry given by
\begin{equation}\label{MatrixM}
 (E^*E)_{m,n}= \int_a^\Omega q_m(x) w(x) q_n(x)  dx, \qquad  m,n \in Z,\quad m,n  \leq N.
\end{equation}

The operator $E E^*$ acts on $L^2((a,\Omega), w(x)dx)$ by means of the integral kernel
\begin{equation}\label{kernel}
  k(x,y)=\sum_{ n\in Z,\,n\leq N} q_n (x)q_n(y),
\end{equation}
namely,  the integral operator $S=EE^*$   is given by
\begin{equation}\label{intoper}
   (Sf)(x)=\int_{a}^\Omega f(y)w(y)k(x,y)dy.
\end{equation}

This is the analog of the integral operator considered by Slepian, Landau and Pollak.

\bigskip

For general \(N\) and \(\Omega\), finding the eigenfunctions of \(EE^*\) and \(E^*E\) analytically is an impossible task. However, a miracle comes to our rescue: we identify a local operator with simple spectrum that shares the same eigenfunctions as the operators \(E E^*\) or \(E^* E\). This approach was successfully employed by Slepian, Landau, and Pollak in the scalar case when dealing with traditional Fourier analysis. They made the following discoveries:

\begin{itemize}
  \item For each \(N\) and \(\Omega\), a symmetric tridiagonal matrix \(\hat T\) exists with a simple spectrum that commutes with \(E^*E\).
  \item For each \(N\) and \(\Omega\), there exists a self-adjoint second-order differential operator \(T\) with a simple spectrum that commutes with the integral operator \(S=EE^*\).
\end{itemize}

In the case of  traditional Fourier analysis, the operators $D$ and $L$ involved in \eqref{bis} may be taken of very low order. In this work, we need to consider arbitrary orders, which leads us to the construction of a higher order differential operator $T$.

\bigskip

For a full discussion  of important numerical aspects in the traditional Fourier case, see \cite{ORokX}.

\section{The Fourier Algebras}\label{FA}

A {\it difference operator} of order $2k\geq0$ is denoted by ${\tilde L}=\sum_{j=-k}^k {\tilde L}_j\,\delta^j$, with ${\tilde L}_k$ or ${\tilde L}_{-k}$ nonzero, where $\delta$ is the usual shift. The operator ${\tilde L}$ acts on functions $h:\mathbb N_0\longrightarrow \mathbb R$ by
\begin{equation}\label{difop}
({\tilde L}h)_n=\sum_{j=-k}^k {\tilde L}_{n,n+j}\, h_{n+j},
\end{equation}
with the understanding that ${\tilde L}_{n,j}=0$ for $j<0$.

\bigskip 
Let us consider the Fourier algebras associated with $q_n(x)$ defined by
$$
\mathcal  F_x=\{ \text {differential operators }  \tilde D :  \tilde Dq_n(x)=\tilde L q_n(x) \text{ for some difference operator } \tilde L, \text{ for all } n,\, x
\},
$$
$$
\mathcal  F_n=\{ \text {difference operators }  \tilde L : \tilde Dq_n(x)=\tilde L q_n(x) \text{ for some differential operator } \tilde D, \text{ for all } n,\, x
\}.
$$

By a slight abuse of notation, given any $\tilde L \in \mathcal  F_n$ we will also denote by $\tilde L$ the semi-infinite $(2k+1)$-diagonal matrix whose entries are given by  \eqref{difop}. We say that the operator $\tilde L $ is symmetric  if the corresponding matrix is symmetric.
For any $x\in \mathbb (a,b)$, let us consider the  semi-infinite vector $\mathbf{ q}(x):=\left(q_0(x),q_1(x),\dots\right)^T$. Then, if we have $\tilde Dq_n(x)=\tilde L q_n(x)$ for some $\tilde D\in\mathcal  F_x$,  in vector notation we will have
$$\tilde D \mathbf{ q}(x)= \tilde L \mathbf{ q}(x).
$$

It is worth noticing that we have made an important notational choice: the vector $\mathbf q(x)$ has zeros in every entry corresponding to an exceptional degree. The reader will realize that we could have skipped all these entries and that the results and statements will hold with the due technical adaptations. Nevertheless, we have made this choice resulting in a more succinct and shorter presentation, not only in the proofs in this section, but also in the examples in Sections \ref{EjemploHermite} and \ref{Ejemplo Laguerre}.
The matrix representation of $\tilde L$ in the basis obtained by including only the $q_n$ with $n$  in $Z$ is obtained by ``compressing" the matrices going with the choice above, i.e. we need to throw away the rows and columns corresponding to the exceptional degrees.

\bigskip

From \eqref{bis} we know that $D\in \mathcal  F_x$ and, since $\{q_n(x)\}_{n\in Z}$ is a orthogonal complete system, that $D$ is symmetric in $L^2(w)$.
This leads us to consider, for given $d$ and $k$ in $\mathbb N_0$, the subspaces 

\[
\mathcal  F_{x,sym}^{d,k}=\left\{
\text{Differential operators } \tilde D:
 \begin{aligned}
\tilde Dq_n(x)=\tilde L q_n(x) \text{ for a symmetric difference operator } \tilde L,
\\
\text{ with } \operatorname{ord}(\tilde D)\leq 2d   \text{ with }     \operatorname{ord}(\tilde L)\leq 2k.
\end{aligned}
\right\}
\]
Notice that if $\tilde Dq_n(x)=\tilde L q_n(x) $ with $\tilde L$ symmetric, since $\{q_n(x)\}_{n\in Z}$ is a orthogonal complete system, $\tilde D$ is a symmetric differential operator in $L^2(w)$. Hence, all the operators in $\mathcal  F_{x,sym}^{d,k}$ are symmetric in $L^2(w)$.
Recall that an operator $\tilde D$ is symmetric in $L^2(w)=L^2((a,b),w(x)dx)$  if for every $f$ and $g$ in an appropriate dense domain we have
$$\int_a^b \tilde D f(x) w(x) g(x) dx=\int_a^b  f(x) w(x) \tilde D g(x) dx.$$
One defines in an analogous manner the notion of symmetry for a differential operator $\tilde D$ in $L^2((a,\Omega),w(x)dx)$.

\bigskip

The identity above requires the vanishing of several terms resulting from integration by parts. This can come about either with the help
of certain coefficients in the operator $\tilde D$ or because the weight $w(x)$ and some of its derivatives vanish at the end points. In the absence of this, one needs to restrict the domain of the operator to functions $f(x),g(x)$ which, maybe along with their derivatives, vanish at these end points. One should keep this remark in mind when reading the comment after the proof of the lemma below.

\bigskip

 Now we give the general results that allow us to find commuting operators in our time-and-band limiting setup.

\begin{lem}\label{Lem} If $T\in\mathcal  F_{x,sym}^{d,k}$ is also symmetric in $L^2((a,\Omega),w(x)dx)$ and  $\deg(Tq_n(x))\leq N$ for any $n\leq N$, then $T$ commutes with the integral operator $S$ in \eqref{intoper}.
\end{lem}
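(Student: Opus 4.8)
The plan is to show that $T$ and $S$ commute by exploiting that both are built from the same family $\{q_n\}$, using the bispectral/Fourier-algebra structure to convert the action of $T$ on the kernel $k(x,y)$ into the action of a difference operator, and then cancelling boundary terms via the two symmetry hypotheses. Concretely, write $S = \chi_\Omega F \chi_N F^* \chi_\Omega$ acting on $L^2((a,\Omega),w\,dx)$ with kernel $k(x,y) = \sum_{n\in Z,\, n\le N} q_n(x)q_n(y)$. Since $T \in \mathcal F_{x,sym}^{d,k}$, there is a symmetric difference operator $\tilde L$ with $T\mathbf q(x) = \tilde L \mathbf q(x)$; that is, $Tq_n(x) = (\tilde L \mathbf q(x))_n = \sum_j \tilde L_{n,n+j} q_{n+j}(x)$ for every $n$.

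First I would compute $T_x S$ (where $T_x$ means $T$ acting in the $x$ variable under the integral sign, legitimate since $T$ is a differential operator). Applying $T_x$ to the kernel gives $T_x k(x,y) = \sum_{n\le N} (Tq_n)(x)\, q_n(y) = \sum_{n\le N} \sum_j \tilde L_{n,n+j} q_{n+j}(x) q_n(y)$. The key point is the hypothesis $\deg(Tq_n) \le N$ for $n \le N$: combined with the band structure of $\tilde L$ and the fact that $q_m$ has degree $m$ for $m\in Z$, this guarantees that the only nonzero contributions $\tilde L_{n,n+j}$ with $n\le N$ have $n+j \le N$ as well — so the double sum only involves indices $\le N$, i.e. $T_x k(x,y)$ can be rewritten, using the symmetry of $\tilde L$, as $\sum_{m,n\le N} \tilde L_{m,n} q_m(x)q_n(y) = \sum_{n\le N}\sum_m \tilde L_{n,m} q_m(x) q_n(y) = T_y k(x,y)$, the same operator acting in the $y$ variable. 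In other words, on the relevant finite-dimensional span, $T_x k(x,y) = T_y k(x,y)$.

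Next I would use this to push $T$ through the integral. We have
\begin{equation*}
(T S f)(x) = \int_a^\Omega f(y) w(y)\, (T_x k)(x,y)\, dy = \int_a^\Omega f(y) w(y)\, (T_y k)(x,y)\, dy.
\end{equation*}
Now integrate by parts / use that $T$ is symmetric in $L^2((a,\Omega),w\,dx)$ to move $T_y$ off the kernel and onto $f$:
\begin{equation*}
\int_a^\Omega f(y) w(y) (T_y k)(x,y)\, dy = \int_a^\Omega (T f)(y) w(y)\, k(x,y)\, dy = (S T f)(x),
\end{equation*}
which is exactly $TS = ST$. The symmetry of $T$ on $L^2((a,\Omega),w\,dx)$ is precisely what licenses this transfer with no surviving boundary terms at $a$ and $\Omega$; the hypothesis that $T$ lies in $\mathcal F_{x,sym}^{d,k}$ (hence is symmetric in $L^2((a,b),w\,dx)$) is what made the difference-operator $\tilde L$ symmetric in the first place, which was needed in the previous step to swap $T_x$ and $T_y$ on the kernel.

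The main obstacle is the degree/support bookkeeping in the first step: one must verify carefully that $\deg(Tq_n)\le N$ for $n\le N$, together with $\operatorname{ord}(\tilde L)\le 2k$ and the exceptional-degree zero entries in $\mathbf q(x)$, really does force the interaction matrix to stay inside the block indexed by $\{n\in Z: n\le N\}$ — i.e. that $\tilde L_{n,m} = 0$ whenever $n \le N < m$. This is where the "compression" remark about throwing away rows and columns for exceptional degrees matters, and where the convention $\tilde L_{n,j}=0$ for $j<0$ must be used. Once the interaction stays in that finite block and $\tilde L$ is symmetric there, the identity $T_x k = T_y k$ on the span of $\{q_n(y): n\le N\}$ is immediate, and the rest is the standard Slepian–Landau–Pollak commutation argument. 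I would also note, as the paper's own remark after the statement anticipates, that if $w$ and its derivatives do not vanish at $\Omega$ one genuinely needs to restrict $S$ to functions vanishing (with enough derivatives) at $\Omega$ for the symmetry of $T$ there to hold; this does not affect the conclusion since the eigenfunctions of $S$ extend/restrict appropriately.
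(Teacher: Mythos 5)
Your argument is correct and is essentially the paper's own proof: you use the degree hypothesis together with the symmetry of the difference operator $\tilde L$ (the paper's $\hat T$, phrased there as $\hat T I_N = I_N \hat T$) to get $T_x k(x,y)=T_y k(x,y)$, and then the symmetry of $T$ on $L^2((a,\Omega),w\,dx)$ to move $T$ onto $f$. Your extra bookkeeping about exceptional degrees (entries multiplying $q_m\equiv 0$) only makes explicit what the paper's vector convention handles implicitly, so there is nothing to add.
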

\begin{proof}  If we denote by $I_N$ the semi-infinite diagonal matrix whose entries $(I_N)_{j,j}$ are $1$ for $0\leq j\leq N$ and zero elsewhere, we have
 $$  k(x,y)=\sum_{ n\in Z,\,n\leq N} q_n (x)q_n(y)=\mathbf{ q}(x)^T\,I_N \, \mathbf{ q}(y).
$$
By hypothesis, there is a symmetric semi-infinite $(2k+1)$-diagonal matrix $\hat T$ such that $T\mathbf{ q} (x)= \hat T\mathbf{ q} (x)$.  This, combined with 
the fact $\deg(Tq_n(x))\leq N$ for any $n\leq N$, gives us $ \hat T\,I_N=I_N\, \hat T$.
 Hence,
\begin{equation}\label{Tx}
T(x)(k(x,y))= ({ \hat T} \mathbf{ q}(x))^T\,I_N \, \mathbf{ q} (y)=\mathbf{ q}(x)^T\,I_N \,{ \hat T} \mathbf{ q}(y)=T(y)(k(x,y)).\end{equation}
Notice that to avoid ambiguity in the previous identities, we have used $T(x)$ and $T(y)$ to denote the action of the differential operator in the variable $x$ and $y$, respectively.

Now, let $f:(a,b)\longrightarrow \mathbb R$ be a sufficiently differentiable function in the domain of $T$, then 
\begin{align*}
T(S(f)(x))
&=  \int_{a}^\Omega f(y)w(y)T(x)k(x,y)dy\stackrel{i}{=} \int_{a}^\Omega f(y)w(y)T(y)k(x,y) dy\\
&\stackrel{ii}{=}  \int_{a}^\Omega Tf(y)w(y)k(x,y) dy{=}(S(Tf)(x)),
\end{align*}
with $(i)$ given by \eqref{Tx} and $(ii)$  a consequence of $T$ being symmetric with respect to $w$ in $(a,\Omega)$. 
This completes the proof.\end{proof}

\bigskip

One more comment before our main result. Since the integral operator  $S$ has as its domain all of 
$L^2((a,\Omega),w(x)dx)$ it is important that the commuting differential operator that we produce should have a large domain
that includes the eigenfunctions of $S$. These eigenfunctions, or their derivatives, do not vanish at either end point. The weight
$w(x)$ will not vanish at $\Omega$ and thus the symmetry of $T$ will require conditions such as \eqref{cond1} below.

\bigskip

\begin{thm}\label{Thm}
If for some  $d,k$ in $\mathbb N_0$, we have that 
\begin{equation}\label{cond}
\dim\left(\mathcal  F_{x,sym}^{d,k}\right)>\frac{k(k+1)+d(d+1)}{2}+1,\end{equation}
then there exists a nonscalar operator $T\in\mathcal  F_{x,sym}^{d,k}$  that commutes with the integral operator $S$ in \eqref{intoper}.
\end{thm}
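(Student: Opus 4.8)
The plan is to reduce the theorem to Lemma~\ref{Lem} by producing, under the hypothesis \eqref{cond}, a nonscalar $T\in\mathcal F_{x,sym}^{d,k}$ that is simultaneously symmetric in $L^2((a,\Omega),w(x)dx)$ and satisfies the degree condition $\deg(Tq_n(x))\le N$ for $n\le N$. Both of these are linear conditions on $T$, so the whole argument is a dimension count inside the finite-dimensional vector space $\mathcal F_{x,sym}^{d,k}$: if the number of linear constraints imposed is strictly less than $\dim\mathcal F_{x,sym}^{d,k}-1$, there is a nonscalar solution (the ``$-1$'' accounts for the scalar operators, which trivially satisfy every condition and must be excluded).

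First I would count the degree constraints. For $T\in\mathcal F_{x,sym}^{d,k}$ we have $T\mathbf q(x)=\hat T\mathbf q(x)$ for a symmetric $(2k+1)$-diagonal matrix $\hat T$, and as observed in the proof of Lemma~\ref{Lem} the requirement $\deg(Tq_n(x))\le N$ for all $n\le N$ is equivalent to the block condition $\hat T I_N=I_N\hat T$, i.e.\ the vanishing of the off-diagonal corner block of $\hat T$ linking indices $\le N$ to indices $>N$. Since $\hat T$ is banded of half-width $k$ and symmetric, that corner block has exactly $k+(k-1)+\dots+1=\tfrac{k(k+1)}{2}$ independent entries, so this imposes at most $\tfrac{k(k+1)}{2}$ linear conditions on $T$.

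Next I would count the symmetry constraint in $L^2((a,\Omega),w)$. Every $T\in\mathcal F_{x,sym}^{d,k}$ is already symmetric in $L^2((a,b),w)$, so the ``boundary'' defect to being symmetric on $(a,\Omega)$ comes only from the endpoint $\Omega$ (the endpoint $a$ contributes nothing, as it already does for the full interval). Integration by parts for an operator of order $\le 2d$ produces a bilinear boundary form at $\Omega$ in the values $f(\Omega),f'(\Omega),\dots,f^{(2d-1)}(\Omega)$ and likewise for $g$; the condition that this antisymmetric boundary form vanishes identically amounts to the vanishing of a certain alternating combination of the coefficients of $T$ and their derivatives at $\Omega$. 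The number of independent such conditions is $1+2+\dots+d=\tfrac{d(d+1)}{2}$ — this is the standard count for the boundary bilinear concomitant of a symmetric operator of order $2d$ (see the remark preceding the theorem and condition \eqref{cond1}). So the symmetry in $L^2((a,\Omega),w)$ costs at most $\tfrac{d(d+1)}{2}$ further linear conditions.

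Combining, the subspace of $T\in\mathcal F_{x,sym}^{d,k}$ satisfying both hypotheses of Lemma~\ref{Lem} is cut out by at most $\tfrac{k(k+1)}{2}+\tfrac{d(d+1)}{2}$ linear equations, hence has dimension at least $\dim\mathcal F_{x,sym}^{d,k}-\tfrac{k(k+1)+d(d+1)}{2}$. By hypothesis \eqref{cond} this exceeds $1$, so the solution space contains an operator not proportional to the identity; any such $T$ commutes with $S$ by Lemma~\ref{Lem}, which finishes the proof. The main obstacle, and the point I would treat most carefully, is the precise bookkeeping of the symmetry constraint at $\Omega$: one must check that the boundary bilinear form of an order-$2d$ operator genuinely has only $\tfrac{d(d+1)}{2}$ independent antisymmetric components after using that $T$ is already symmetric on all of $(a,b)$, and that no hidden compatibility with the banded-matrix picture reduces or inflates this count. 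The degree count is routine once the $\hat T I_N=I_N\hat T$ reformulation from Lemma~\ref{Lem} is in hand.
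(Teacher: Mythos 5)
Your proposal is correct and follows essentially the same route as the paper: reduce to Lemma~\ref{Lem} and count $\tfrac{k(k+1)}{2}$ linear conditions from the corner block of $\hat T$ plus $\tfrac{d(d+1)}{2}$ from symmetry at $\Omega$, then conclude by a dimension count that leaves room for a nonscalar solution. The only (cosmetic) difference is that the paper obtains the $\tfrac{d(d+1)}{2}$ symmetry conditions by writing $T=w^{-1/2}\bigl(\sum_{j=0}^d\partial_x^j t_j\partial_x^j\bigr)w^{1/2}$ and imposing \eqref{cond1}, whereas you count the components of the boundary bilinear concomitant at $\Omega$ — the same count.
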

\begin{proof}
Given an arbitrary $T\in\mathcal  F_{x,sym}^{d,k}$ (with real coefficients), we can express it as $$T=w(x)^{-1/2}\left(\sum_{j=0}^d\,\partial_x^j\,t_j(x)\,\partial_x^j\right) w(x)^{1/2},$$
for some functions $t_j$. Given that it is symmetric with respect to $w$ in $(a,b)$, if we also want it to be symmetric in $(a,\Omega)$ it is enough to impose the following $\frac{d(d+1)}{2}$ linear conditions: 

\begin{equation}\label{cond1}
t_j^{(i)}(\Omega)=0,\quad \text{ for } j=1,\dots,d,\quad  i=0,\dots,j-1.
\end{equation}

Since $T\in\mathcal  F_{x,sym}^{d,k}$, we  have a symmetric semi-infinite $(2k+1)$-diagonal matrix $\hat T$ such that $T\mathbf{ q} (x)= \hat T\mathbf{ q} (x)$, 
\begin{equation}\label{t-hat}
\hat T=
\left[
    \begin{array}{ccccc ;{2pt/2pt}ccc}
 {\hat T}_{00}& {\hat T}_{01}&\dots  &   {\hat T}_{0k}& &            &    &      \\
{\hat T}_{10}   &\ddots&\ddots &\ddots     &\ddots&         &    &      \\
\vdots   &\ddots&\ddots &\ddots     &\ddots&         {\hat T}_{N+1-k,N+1}     &    &      \\
{\hat T}_{k0} &\ddots&\ddots      &\ddots   &\ddots&\vdots     &  \ddots  &      \\
          &\ddots&\ddots&\ddots&\ddots&         {\hat T}_{N,N+1}     &\dots &   {\hat T}_{N,N+k}   \\ \hdashline[2pt/2pt]
          & &\ddots&   \ddots&\ddots&          \ddots&          \ddots&          \ddots
    \end{array}
\right] .
\end{equation}
Hence, if we want $\deg(Tq_n(x))\leq N$ for any $n\leq N$, it is enough to satisfy the following $\frac{k(k+1)}{2}$ linear conditions:
\begin{equation}\label{cond2}
\hat T_{i,j}=0 ,\quad \text{ for } j=N+1,\dots,N+k,\quad  i=j-k,\dots,N,
\end{equation} 
i.e., the elements of the top right-hand side of the matrix \eqref{t-hat} should vanish.
We understand that $\hat T_{i,j}=0$ if $i$ is  negative.
 
Then, for $\mathcal  F_{x,sym}^{d,k}$ to contain a nonzero operator $T$ satisfying \eqref{cond1} and \eqref{cond2}, it is enough that its dimension should be at least $\frac{k(k+1)+d(d+1)}{2}+1$.  
If we require that $T$ should not be a scalar multiplication, it is sufficient that the dimension should be larger than $\frac{k(k+1)+d(d+1)}{2}+1$. By Lemma \ref{Lem}, any operator $T$ satisfying \eqref{cond1} and \eqref{cond2} commutes with $S$. This completes the proof
which is in part an extension of the one in \cite{Per}.
\end{proof}

In the following sections, we consider two different families of exceptional orthogonal polynomials.  We apply Theorem \ref{Thm} to show the existence of a commuting differential operator, and we use the concepts in Lemma \ref{Lem} to build it explicitly.

\bigskip

A careful look at the appropriate  selfadjoint extension of the symmetric commuting differential operator $T$ has, to the best of our knowledge, only been addressed in \cite{KV} and only for the case of traditional Fourier analysis.

\section{Hermite Exceptional Polynomials} \label{EjemploHermite}

Let us consider the sequence of Hermite exceptional polynomials given by 
$$q_n(x)=\frac{\sqrt{(n - 1) (n - 2)}}{\sqrt[4]{\pi}\sqrt{ 2^nn!}}
\left(H_n(x)+4nH_{n-2}(x)+4n(n-3)H_{n-4}(x)\right),\quad n\in\mathbb N_0, $$
where $H_n$ are the classical Hermite polynomials given by the Rodrigues formula

$$H_n=(-1)^ne^{x^2}D_x^ne^{-x^2}.$$

We have that $q_n\equiv0$ only for $n$ in $X=\{1,2\}$ and  \{$q_n(x)\}_{n\notin X}$ is a complete orthonormal system with respect to the weight 
$$w(x)=\frac{e^{-x^2}}{(1+2x^2)^2}, \quad x\in(-\infty,\infty).$$

Now we use Theorem \ref{Thm} to find a differential operator $T$ that commutes with the operator $S$ given by 
$$(Sf)(y)=\int_{-\infty}^\Omega \left(\sum_{k=0}^N q_k(x)q_k(y)\,w(x)\right)\, f(x)\, dx.$$
Namely, we look for $d$ and $k$ satisfying \eqref{cond}. 
Solving the system of equations $\tilde D q_n(x)=\tilde L q_n(x)$, with $\deg(\tilde D)\leq 2$ and $\deg(\tilde T)\leq 6$, one finds that $\mathcal  F_{x,sym}^{3,3}$ contains  the differential operators
\begin{align*}
D_0&=\dfrac{d^2}{dx^2}-\frac{2 \, {\left(2 \, x^{3} + 5 \, x\right)}}{2 \, x^{2}  +1}\dfrac{d}{dx},\\
D_1&=x\dfrac{d^2}{dx^2} -\frac{4 \, x^{4} + 8 \, x^{2} - 1}{2 \, x^{2} + 1}\dfrac{d}{dx},\\
D_2&=\left(-\frac{1}{56} \, x^{2} + \frac{17}{112}\right)\dfrac{d^2}{dx^2}+
\left(  \frac{4 \, x^{5} - 28 \, x^{3} - 87 \, x}{56 \, {\left(2 \, x^{2} + 1\right)}}\right)\dfrac{d}{dx}+1,\\
D_3&= \frac{4x^3}{3} + 2 x ,
\end{align*} with $D_jq_n(x)=L_jq_n(x)$ for 
\begin{align*}
L_0=&-2n,\\
L_1=&
	\sqrt{2} \sqrt{n - 1} \sqrt{n - 3} \sqrt{n} \delta^{-1}
	+\sqrt{2} \sqrt{n + 1} \sqrt{n - 2} \sqrt{n} \delta
,\\
L_2=&
	\frac{   \sqrt{{\left(n - 2\right)} {\left(n - 3\right)} {\left(n - 4\right)} n} }{56}\delta^{-2} 
	+\frac{   {{\left(n - 4\right)} {\left(n - 7\right)}} }{28}
	+ \frac{  \sqrt{{\left(n + 2\right)} {\left(n - 1\right)} {\left(n - 2\right)} n}}{56}\delta^{2}
,\\
L_3=& \frac{\sqrt{2}}{3}\sqrt{n(n-4)(n-5)} \delta^{-3}+\sqrt{2n(n-1)(n-3)}\, \delta^{-1}+\sqrt{2(n+1)n(n-2)}\, \delta\\
&+\frac{\sqrt{2}}{3}\sqrt{(n+3)(n-1)(n-2)} \delta^3.
\end{align*}

\bigskip

Furthermore, one can verify that the following $14$ operators are linearly independent:\begin{align*}Y&=\{
Id,D_0,D_1,D_2,D_3,D_0^2,D_0^3,D_1^2,D_3D_0+D_0D_3,D_3D_0^2+D_0^2D_3,D_0D_1+D_1D_0,D_3D_0^3+D_0^3D_3,\\ &D_0^2D_1+D_1D_0^2
,D_0D_1^2+D_1^2D_0\}.\end{align*} 
Since $Y\subset \mathcal  F_{x,sym}^{3,3}$ , and  $14>\frac{3(3+1)+3(3+1)}{2}+1$, by Theorem \ref{Thm} we know that there exists an operator $T\in\mathcal  F_{x,sym}^{3,3}$ that commutes with $S$.
To find such a $T$, we consider a linear combination of the operators in $Y$ that satisfies both hypothesis of Lemma \ref{Lem}. After solving the corresponding system of linear equations one obtains that\begin{align*}
T=	&- \, \Omega^{3}D_0^3+\tfrac{3}{4}\left( 2 \, \Omega^{2} - 1 \right)\left({D_1}D_0^2+D_0^2{D_1}\right)-\tfrac{3}{2} \, \Omega({D_1}^2D_0+D_0{D_1}^2)+\tfrac{3}{8} \left( {D_3}D_0^3+D_0^3{D_3}\right)\\
	&+3( \, N \Omega^{2} + 2\, \Omega^{2} - \tfrac{3}{2}  \, N - 4)({D_1}D_0+D_0{D_1})-3\, {\left(2 \, \Omega^{2} - 1\right)} \Omega \ D_0^2\\
	&-2\Omega \, {\left(54 \, N^{2} + 4 \, \Omega^{2} + 48 \, N - 3\right)}  D_0 \nonumber -6 \, {\left(2 \, N + 1\right)} \Omega\ {D_1}^2+\tfrac{9}{8} (2 \, N + 1)({D_3}D_0^2+D_0^2{D_3})\\
	&+\tfrac{3}{2} (3 \, N^{2} + 3 \, N - 7)(D_0{D_3}+{D_3}D_0) +3(6 \, N \Omega^{2} - 4 \, N^{2} + 2 \, \Omega^{2} - 17 \, N - 7){D_1}\\
	&+672{\left(N + 1\right)} N \Omega\ {D_2}+\tfrac{3}{2} {\left(2 \, N^{2} + 2 \, N - 15\right)} {\left(2 \, N + 1\right)}{D_3}\nonumber
\end{align*} satisfies the desired conditions. Then, $T$ commutes with $S$.

 \bigskip

This extension of the Perline representation of a commuting $T$ is not unique, and a different choice of enough linearly independent operators
in $\mathcal  F_{x,sym}^{3,3}$ would produce a different representation.

\bigskip
Returning to the operator $T$ given above, if one in interested in an explicit expression for the coefficients one can use 
$$T=w^{-1/2}\left(\sum_{k=0}^3 \partial_x^k f_k \partial_x^k\right)w^{1/2},$$ 
with 
\begin{dmath*}
f_3=(x-\Omega)^3
\end{dmath*}
\begin{dmath*}
f_2=
{3}{{\left(2 \, x^{2} + 1\right)}^{-2}}  {\left(\Omega - x\right)}^{2}\, \left(4 \, \Omega x^{6} - 4 \, x^{7} + 8 \, N x^{5} + 8 \, \Omega x^{4} - 12 \, x^{5} + 8 \, N x^{3} + 21 \, \Omega x^{2} - 17 \, x^{3} + 2 \, N x - 7 \, \Omega + 10 \, x\right),
\end{dmath*}
\begin{dmath*}
f_1=
-{3}{{\left(2 \, x^{2} + 1\right)}^{-4}}{\left(\Omega - x\right)} \, \left(16 \, \Omega^{2} x^{12} - 32 \, \Omega x^{13} + 16 \, x^{14} + 64 \, N \Omega x^{11} - 64 \, N x^{12} + 64 \, N^{2} x^{10} + 64 \, \Omega^{2} x^{10} - 160 \, \Omega x^{11} + 96 \, x^{12} + 192 \, N \Omega x^{9} - 256 \, N x^{10} + 160 \, N^{2} x^{8} + 168 \, \Omega^{2} x^{8} - 272 \, \Omega x^{9} + 72 \, x^{10} + 416 \, N \Omega x^{7} - 448 \, N x^{8} + 160 \, N^{2} x^{6} + 152 \, \Omega^{2} x^{6} - 192 \, \Omega x^{7} - 120 \, x^{8} + 160 \, N \Omega x^{5} - 128 \, N x^{6} + 80 \, N^{2} x^{4} - 279 \, \Omega^{2} x^{4} + 334 \, \Omega x^{5} - 359 \, x^{6} - 76 \, N \Omega x^{3} + 92 \, N x^{4} + 20 \, N^{2} x^{2} + 1210 \, \Omega^{2} x^{2} - 1662 \, \Omega x^{3} + 324 \, x^{4} - 36 \, N \Omega x + 32 \, N x^{2} + 2 \, N^{2} - 83 \, \Omega^{2} + 610 \, \Omega x - 497 \, x^{2} - 2 \, N + 18\right) ,
\end{dmath*}
\begin{dmath*}
f_0= {{\left(2 \, x^{2} + 1\right)}^{-6}}
	(64 \, \Omega^{3} x^{18} - 192 \, \Omega^{2} x^{19} + 192 \, \Omega x^{20} - 64 \, x^{21} + 384 \, N \Omega^{2} x^{17} - 768 \, N \Omega x^{18} + 384 \, N x^{19} + 768 \, N^{2} \Omega x^{16} + 384 \, \Omega^{3} x^{16} - 768 \, N^{2} x^{17} - 1344 \, \Omega^{2} x^{17} + 1536 \, \Omega x^{18} - 576 \, x^{19} + 512 \, N^{3} x^{15} + 1920 \, N \Omega^{2} x^{15} - 4608 \, N \Omega x^{16} + 2688 \, N x^{17} + 3456 \, N^{2} \Omega x^{14} + 624 \, \Omega^{3} x^{14} - 4224 \, N^{2} x^{15} - 1296 \, \Omega^{2} x^{15} + 336 \, \Omega x^{16} + 336 \, x^{17} + 2304 \, N^{3} x^{13} + 4128 \, N \Omega^{2} x^{13} - 8640 \, N \Omega x^{14} + 4000 \, N x^{15} + 50496 \, N^{2} \Omega x^{12} + 112 \, \Omega^{3} x^{12} - 10176 \, N^{2} x^{13} + 3552 \, \Omega^{2} x^{13} - 10896 \, \Omega x^{14} + 7616 \, x^{15} + 4224 \, N^{3} x^{11} + 3840 \, N \Omega^{2} x^{11} + 37824 \, N \Omega x^{12} - 1344 \, N x^{13} + 137184 \, N^{2} \Omega x^{10} - 1332 \, \Omega^{3} x^{10} - 12192 \, N^{2} x^{11} + 12156 \, \Omega^{2} x^{11} - 29052 \, \Omega x^{12} + 18996 \, x^{13} + 4160 \, N^{3} x^{9} - 1464 \, N \Omega^{2} x^{9} + 134928 \, N \Omega x^{10} - 10584 \, N x^{11} + 165360 \, N^{2} \Omega x^{8} + 1200 \, \Omega^{3} x^{8} - 7440 \, N^{2} x^{9} + 9804 \, \Omega^{2} x^{9} - 31272 \, \Omega x^{10} + 20940 \, x^{11} + 2400 \, N^{3} x^{7} + 11160 \, N \Omega^{2} x^{7} + 153312 \, N \Omega x^{8} - 9176 \, N x^{9} + 107784 \, N^{2} \Omega x^{6} + 17521 \, \Omega^{3} x^{6} - 1944 \, N^{2} x^{7} - 24975 \, \Omega^{2} x^{7} - 1917 \, \Omega x^{8} + 9851 \, x^{9} + 816 \, N^{3} x^{5} + 8670 \, N \Omega^{2} x^{5} + 109500 \, N \Omega x^{6} - 12858 \, N x^{7} + 39708 \, N^{2} \Omega x^{4} - 139845 \, \Omega^{3} x^{4} + 108 \, N^{2} x^{5} + 266580 \, \Omega^{2} x^{5} - 160785 \, \Omega x^{6} + 31338 \, x^{7} + 152 \, N^{3} x^{3} - 1044 \, N \Omega^{2} x^{3} + 51324 \, N \Omega x^{4} - 9792 \, N x^{5} + 7818 \, N^{2} \Omega x^{2} + 69189 \, \Omega^{3} x^{2} + 162 \, N^{2} x^{3} - 247383 \, \Omega^{2} x^{3} + 242145 \, \Omega x^{4} - 68367 \, x^{5} + 12 \, N^{3} x - 1134 \, N \Omega^{2} x + 11466 \, N \Omega x^{2} - 1964 \, N x^{3} + 642 \, N^{2} \Omega - 2097 \, \Omega^{3} + 24 \, N^{2} x + 27432 \, \Omega^{2} x - 56673 \, \Omega x^{2} + 29088 \, x^{3} + 582 \, N \Omega + 144 \, N x + 1134 \, \Omega - 1512 \, x).
\end{dmath*}

\bigskip

The construction above follows the general arguments given in the previous section. This is not the
only way to produce such a commuting operator $T$. One can avail oneself of enough many linearly independent difference operators in the Fourier algebra and, given $N$ and $\Omega$, look for a linear combination of these
	that would commute with the matrix of size $N$ of inner products of the exceptional polynomials in the interval $(-\infty,\Omega)$. When we replace in this linear combination the difference
	operators by their corresponding differential ones in the Fourier algebra we obtain a commuting $T$.

\section{Laguerre Exceptional Polynomials}\label{Ejemplo Laguerre}

Let $\alpha>0$ and $L^{(\alpha)}_n(x)$, $n\geq 0$, denote the classical Laguerre polynomials orthogonal with respect
to the weight $e^{-x}x^\alpha$ in the interval $(0,+\infty)$. We consider the sequence of exceptional Laguerre polynomials
$$q_n(x)=\sqrt{\frac{(\alpha+n-1)(n-1)!}{(\alpha+n)\Gamma(n+\alpha)}}\left(-(x+\alpha+1)L_{n-1}^{(\alpha)}+L_{n-2}^{(\alpha)}\right),\quad n\in\mathbb N_0.$$

We have that $q_n\equiv0$ only for $n=0$ and  \{$q_n(x)\}_{n>0}$ is a complete orthonormal system with respect to the weight 
$$w(x)=\frac{e^{-x}x^\alpha}{(x+\alpha)^2}.$$ 

In this section, just as we did in the Hermite case, we find a differential operator $T$ that commutes with the operator $S$ given by 
$$(Sf)(y)=\int_{0}^\Omega \left(\sum_{k=0}^N q_k(x)q_k(y)\,w(x)\right)\, f(x)\, dx.$$

In this situation, and proceeding as in the previous example, we find that $\mathcal  F_{x,sym}^{2,2}$ contains the   differential operators
\begin{align*}
D_0=& \frac{1}{4} \, x\dfrac{d^2}{dx^2}+ \frac{{\left(\alpha + x + 1\right)} {\left(\alpha - x\right)}}{4 \, {\left(\alpha + x\right)}}\dfrac{d}{dx}+  \frac{\alpha + 3 \, x}{4 \, {\left(\alpha + x\right)}},\\
D_1=&{\left(3 \, \alpha - x + 10\right)} \frac{x}{42}\dfrac{d^2}{dx^2}\\
	&+\frac{3 \, \alpha^{3} - \alpha^{2} x - 3 \, \alpha x^{2} + x^{3} + 13 \, \alpha^{2} - 5 \, \alpha x - 10 \, x^{2} + 10 \, \alpha - 10 \, x}{42 \, {\left(\alpha + x\right)}} \dfrac{d}{dx}\\
	&-\frac{\alpha^{3} - \alpha x^{2} + 5 \, \alpha^{2} - 3 \, \alpha x + 10 \, \alpha - 10 \, x}{42 \, {\left(\alpha + x\right)}},\\
D_2=&(x+\alpha)^2,
\end{align*}

 with $D_jq_n(x)=L_jq_n(x)$ for 
\begin{align*}
L_0=&(3-n)/4,\\ 
L_1=&
	-\frac{\sqrt{\alpha + n} \sqrt{\alpha + n - 1} \sqrt{\alpha + n - 2} \sqrt{n - 1}}{42}\delta^{-1} 
	+\frac{ {\left(n - 1\right)} {\left(n - 5\right)}}{21}\\
	 &-\frac{ \sqrt{\alpha + n + 1} \sqrt{\alpha + n} \sqrt{\alpha + n - 1} \sqrt{n}}{42}\delta ,\\ 
L_2=&
	\sqrt{\alpha + n} \sqrt{\alpha + n - 3} \sqrt{n - 1} \sqrt{n - 2} \,\delta^{-2}
	 -4 \, \sqrt{\alpha + n} \sqrt{\alpha + n - 1} \sqrt{\alpha + n - 2} \sqrt{n - 1} \,\delta^{-1}\\
	 &	+ (4  \alpha^{2} + 10 \, \alpha n + 6 \, n^{2} - 5 \, \alpha - 6 \, n)
	 -4  \sqrt{\alpha + n + 1} \sqrt{\alpha + n} \sqrt{\alpha + n - 1} \sqrt{n}\, \delta^{1}\\
	&+\sqrt{\alpha + n + 2} \sqrt{\alpha + n - 1} \sqrt{n + 1} \sqrt{n}\, \delta^{2}
, 
\end{align*}

In this case, we see that $\mathcal  F_{x,sym}^{2,2}$ already contains at least $8$  operators :
$$Y=\{
Id,D_0,D_1,D_2
,D_0^2,D_1^2,D_0D_1+D_1D_0,D_2D_0+D_0D_2\}
. $$
One can verify that the operators in $Y$ are linearly independent. Hence, since $8>\frac{2(2+1)+2(2+1)}{2}+1$, by Theorem \ref{Thm} we know that there exists an operator $T\in\mathcal  F_{x,sym}^{2,2}$ that commutes with $S$. 
As before, to find such a $T$, we consider a linear combination of the operators in $Y$ that satisfies the hypothesis of Lemma \ref{Lem}, obtaining
\begin{align*}
T=
	&1764 D_1^2
	+168(D_0D_1+D_1D_0)
	+16  \left(\Omega - 3 \, \alpha - 10\right)^{2}D_0^2
	+4\left(N + \alpha\right)\left(D_0D_2+D_2D_0\right)\\
	&+42 \, {\left(2 \, N \Omega + 2 \, N \alpha + 4 \, \alpha^{2} - 5 \, \Omega + 15 \, \alpha + 40\right)}D_1\\
	&+4\Big(N \Omega^{2} - 6 \, N \Omega \alpha - \Omega^{2} \alpha - 7 \, N \alpha^{2} + 2 \, \Omega \alpha^{2} - 13 \, \alpha^{3} - 20 \, N \Omega - 5 \, \Omega^{2} - 20 \, N \alpha + 30 \, \Omega \alpha - 85 \, \alpha^{2} \\
		&\quad\quad+ 90 \, \Omega - 270   \alpha - 400\Big) D_0
	+{\left(N + \alpha\right)} {\left(N - \alpha - 5\right)}D_2.
\end{align*}

An  explicit expression is given by
$$T=w^{-1/2}\left(\sum_{k=0}^2 \partial_x^k f_k \partial_x^k\right)w^{1/2},$$
with

\begin{dmath*}
f_2=\left(\Omega - x\right)^{2} x^{2},
\end{dmath*}
\begin{dmath*}
f_1=\left(\Omega - x\right){\left(\alpha + x\right)}^{-2}\Big(\Omega \alpha^{4} + 2 \, N \Omega \alpha^{2} x - 2 \, \Omega \alpha^{3} x + \alpha^{4} x + 4 \, N \Omega \alpha x^{2} - 2 \, N \alpha^{2} x^{2} - 6 \, \Omega \alpha^{2} x^{2} + 4 \, \alpha^{3} x^{2} + 2 \, N \Omega x^{3} - 4 \, N \alpha x^{3} - 2 \, \Omega \alpha x^{3} + 4 \, \alpha^{2} x^{3} - 2 \, N x^{4} + \Omega x^{4} - x^{5} + 3 \, \Omega \alpha^{3} - 11 \, \Omega \alpha^{2} x + 6 \, \alpha^{3} x - 19 \, \Omega \alpha x^{2} + 30 \, \alpha^{2} x^{2} - 5 \, \Omega x^{3} + 30 \, \alpha x^{3} + 6 \, x^{4} - 10 \, \Omega \alpha x + 38 \, \alpha^{2} x - 2 \, \Omega x^{2} + 96 \, \alpha x^{2} + 50 \, x^{3}\Big) ,
\end{dmath*}
 \begin{dmath*}
f_0=\left(\alpha + x\right)^{-3}
\Big(2 \, N^{2} \alpha^{5} - 4 \, N \Omega \alpha^{5} - 4 \, N \alpha^{6} - 9 \, \alpha^{7} + 10 \, N^{2} \alpha^{4} x - 8 \, N \Omega \alpha^{4} x - 8 \, N \alpha^{5} x - 25 \, \alpha^{6} x + 20 \, N^{2} \alpha^{3} x^{2} - 22 \, \alpha^{5} x^{2} + 20 \, N^{2} \alpha^{2} x^{3} + 8 \, N \Omega \alpha^{2} x^{3} + 8 \, N \alpha^{3} x^{3} - 6 \, \alpha^{4} x^{3} + 10 \, N^{2} \alpha x^{4} + 4 \, N \Omega \alpha x^{4} + 4 \, N \alpha^{2} x^{4} - \alpha^{3} x^{4} + 2 \, N^{2} x^{5} - \alpha^{2} x^{5} + 2 \, N \Omega^{2} \alpha^{3} - 32 \, N \Omega \alpha^{4} - 2 \, \Omega^{2} \alpha^{4} - 40 \, N \alpha^{5} + 13 \, \Omega \alpha^{5} - 86 \, \alpha^{6} + 10 \, N \Omega^{2} \alpha^{2} x - 88 \, N \Omega \alpha^{3} x - 10 \, \Omega^{2} \alpha^{3} x - 128 \, N \alpha^{4} x + 34 \, \Omega \alpha^{4} x - 249 \, \alpha^{5} x + 14 \, N \Omega^{2} \alpha x^{2} - 80 \, N \Omega \alpha^{2} x^{2} - 14 \, \Omega^{2} \alpha^{2} x^{2} - 154 \, N \alpha^{3} x^{2} + 24 \, \Omega \alpha^{3} x^{2} - 240 \, \alpha^{4} x^{2} + 6 \, N \Omega^{2} x^{3} - 24 \, N \Omega \alpha x^{3} - 6 \, \Omega^{2} \alpha x^{3} - 90 \, N \alpha^{2} x^{3} - 2 \, \Omega \alpha^{2} x^{3} - 82 \, \alpha^{3} x^{3} - 30 \, N \alpha x^{4} - 5 \, \Omega \alpha x^{4} - 10 \, \alpha^{2} x^{4} - 6 \, N x^{5} - 5 \, \alpha x^{5} - 80 \, N \Omega \alpha^{3} - 7 \, \Omega^{2} \alpha^{3} - 78 \, N \alpha^{4} + 97 \, \Omega \alpha^{4} - 374 \, \alpha^{5} - 240 \, N \Omega \alpha^{2} x - 43 \, \Omega^{2} \alpha^{2} x - 234 \, N \alpha^{3} x + 293 \, \Omega \alpha^{3} x - 1058 \, \alpha^{4} x - 240 \, N \Omega \alpha x^{2} - 57 \, \Omega^{2} \alpha x^{2} - 234 \, N \alpha^{2} x^{2} + 311 \, \Omega \alpha^{2} x^{2} - 946 \, \alpha^{3} x^{2} - 80 \, N \Omega x^{3} - 21 \, \Omega^{2} x^{3} - 78 \, N \alpha x^{3} + 115 \, \Omega \alpha x^{3} - 222 \, \alpha^{2} x^{3} + 40 \, \alpha x^{4} + 2 \, \Omega^{2} \alpha^{2} + 242 \, \Omega \alpha^{3} - 920 \, \alpha^{4} - 6 \, \Omega^{2} \alpha x + 790 \, \Omega \alpha^{2} x - 2684 \, \alpha^{3} x + 884 \, \Omega \alpha x^{2} - 2596 \, \alpha^{2} x^{2} + 320 \, \Omega x^{3} - 840 \, \alpha x^{3} - 1200 \, \alpha^{3} - 3600 \, \alpha^{2} x - 3600 \, \alpha x^{2} - 1200 \, x^{3}\Big).
\end{dmath*}

\bigskip
\bigskip

\section*{Acknowledgements}
The research of M. M. Castro was partially supported by  PID2021-124332NB-C21 (FEDER(EU) / Ministerio de Ciencia e Innovaci\'on-Agencia Estatal de Investigaci\'on) and FQM-262 (Junta de Andalucía).
The research of I. Zurri\'an was supported by CONICET PIP grant 112-200801-01533, SECyT  grant  30720150100255CB, Ministerio de Ciencia e Innovaci\'on PID2021-124332NB-C21 and Universidad de Sevilla VI PPIT - US.

\end{document}